\newtheorem{theorem}{Theorem}
\newtheorem{lemma}[theorem]{Lemma}
\newtheorem{proposition}[theorem]{Proposition}
\newtheorem{question}[theorem]{Question}
\newtheorem{definition}[theorem]{Definition}
\begin{document}

\title[ A common extension  ]{ A 
common  extension of Lindel\"of,
H-closed and ccc} 
\author[A. Bella] {Angelo Bella}
\thanks{The research that led to the present paper was partially
supported by a grant of the group GNSAGA of INdAM}
\address{Dipartimento di Matematica e Informatica, viale A. Doria
6, 95125 Catania, Italy}
\email{bella@dmi.unict.it}
\subjclass[2010]{ 54A25, 54D20, 54D55.}
\keywords{cardinality bounds, cardinal invariants,
Lindel\"of, 
$\chi$-Lindel\"of, countable chain condition,  H-closed, $\kappa
$-net. 
}
\maketitle  

\begin{abstract}
The inequality $|X| \leq 2^{\chi(X)}$ has been proved to be true
for Lindel\"of spaces (Arhangel'ski\u\i, 1969), 
$H$-closed spaces (Dow-Porter, 1982) and ccc spaces (Hajnal-
Ju\'asz 1967), by   quite different
arguments. We present a common extension of all these 
properties which allows us to give a unified proof of
these  three theorems.
\end{abstract}

\bigskip 

All spaces in this  note are assumed to be Hausdorff.

For all undefined notions see Engelking~\cite{Engelking}
 or Porter-Woods~\cite{PW}. 
  $[S]^{<\kappa }$ (resp. $[S]^{\le \kappa }$)  denotes the
collection of all subsets of $S$ of cardinality $<\kappa $ (resp.
$ \le \kappa $).  The
character $\chi(X)$ of a space $X$ is the smallest 
infinite cardinal $\kappa $ such that  every point of $X$ has a
local base of
cardinality not exceeding $\kappa $. 

\smallskip 
The starting point here is the cardinal inequality $|X|\le
2^{\chi(X)}$, proved by Arhangel'ski\u\i~for a  Lindel\"of
space  and by Hajnal and Ju\'asz for a ccc space.

 \emph
{Lindel\"of} means that every open
cover has a countable subcover. \emph{ccc} (= countable chain
condition) means that every family of pairwise disjoint open sets
is countable.

A space $X$ is \emph{H-closed} if every open cover
$\mathcal{V}$
of $X$ has a finite subfamily $\mathcal{W}$ such that
$X=\overline {\bigcup \mathcal W}$.

The previous results are special instances  of  two cardinal
inequalities which were a milestone in the theory of cardinal 
invariants (see \cite{A} and \cite{HJ}).

In  1982 Dow and
Porter \cite{DowPorter82} used H-closed extensions of discrete
spaces to demonstrate that $|X|\leq 2^{\chi(X)}$  for any
H-closed space $X$. In 2006
Hodel~\cite{Hodel2006} gave a proof of the Dow-Porter result
using a closing-off argument. This approach plays a key role
here, as it did in \cite{bella}.  

 Since Lindel\"of, H-closed and ccc are mutually independent
properties in the
class of Hausdorff spaces, 
a  natural general question is the following:
\begin{question} \label{q0}
Is there a property $\mathcal{P}$ of a Hausdorff space $X$ that
a) generalizes H-closed, ccc  and Lindel\"of
simultaneously, and such that b) $|X|\leq 2^{\chi(X)}$ for spaces
$X$ with property $\mathcal{P}$?
\end{question}

A positive answer  to the weaker question concerning only the
couple  Lindel\"of
and H-closed is given in \cite{bella} and \cite{carpor}.
\smallskip 
By combining the results  from \cite{bella} and
\cite{BS}, in this short note we present a property $\mathcal P$
which  provides a
full positive answer to Question \ref{q0}.

 For the proof of some
relevant results given in \cite{bella}, we obviously refer to 
that paper.
  
\medskip  
Following \cite{Hodel2006},
for a given cardinal $\kappa $, a $\kappa $-net in a space $X$ is
a set $\xi=\{x_F:F\in [\kappa ]^{<\omega}\}$.
\begin {definition} (Hodel)  Let $X$ be a space, $A\subseteq X$
and
$\xi=\{x_F:F\in [\kappa ]^{<\omega}\}\subseteq A$. A point $x\in
A$ is a $\theta$-cluster point  of $\xi$ relative to $A$  if,
given any open set $U$ in $X$ with $x\in U$ and any $\alpha
<\kappa $, there exists $F\in [\kappa ]^{<\omega}$ such that
$\alpha \in F$ and $x_F\in \overline { U\cap A}$. If $A=X$, we
say that
$x$ is a $\theta$-cluster point of $\xi$. \end{definition} 

Recall that, given a space $X$ a set $A\subseteq X$ is an H-set 
if for any collection $\mathcal U$ of open sets in $X$, with
$A\subseteq \bigcup \mathcal U$,   there is a finite
subcollection $\mathcal V\subseteq \mathcal U$ such that
$A\subseteq \overline {\bigcup \mathcal V}$.

\begin{lemma} \label{lemma4} [\cite{Hodel2006}, Lemma 4.8]  
Let $X$ be a space and
$\xi=\{x_F:F\in [\chi(X)]^{<\omega}\}$ a $\chi(X)$-net in $X$. If
$\xi $ has a $\theta$-cluster point $x$ in $X$,
then there exists a set $A(\xi)$ such that:

\begin{enumerate}
\item $\xi\subseteq A(\xi)$ and $|A(\xi)|\le \chi(X)$;

\item $x\in A(\xi)$ and $x$  is a $\theta$-cluster point
of $\xi$ relative to $A(\xi)$.
\end{enumerate}
\end{lemma}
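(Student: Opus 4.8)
The plan is to produce $A(\xi)$ in a single step: to $\xi\cup\{x\}$ we adjoin, for every relevant neighborhood of $x$ and every relevant index, a $\theta$-cluster witness drawn from $\xi$, together with enough extra points of $X$ to force that witness into the \emph{relative} closure. Write $\kappa=\chi(X)$ and, for each $y\in X$, fix a local base $\{B(y,\gamma):\gamma<\kappa\}$ at $y$; this is possible since $\chi(X)=\kappa$. For each pair $\alpha,\beta<\kappa$ I would apply the hypothesis that $x$ is a $\theta$-cluster point of $\xi$ (relative to $X$) to the open set $B(x,\beta)\ni x$ and to the ordinal $\alpha$: this yields a finite set $F(\alpha,\beta)\in[\kappa]^{<\omega}$ with $\alpha\in F(\alpha,\beta)$ and $x_{F(\alpha,\beta)}\in\overline{B(x,\beta)}$. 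These points $x_{F(\alpha,\beta)}$ are the candidate witnesses; the catch is that so far they only lie in $\overline{B(x,\beta)}$, not in the closure of $B(x,\beta)$ intersected with the small set we are building.

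To repair this, for all $\alpha,\beta,\gamma<\kappa$ observe that $x_{F(\alpha,\beta)}\in\overline{B(x,\beta)}$ forces $B(x_{F(\alpha,\beta)},\gamma)\cap B(x,\beta)\neq\emptyset$, so I may choose a point $y_{\alpha,\beta,\gamma}\in B(x_{F(\alpha,\beta)},\gamma)\cap B(x,\beta)$. Then set $A(\xi)=\{x\}\cup\xi\cup\{y_{\alpha,\beta,\gamma}:\alpha,\beta,\gamma<\kappa\}$. Since $|[\kappa]^{<\omega}|=\kappa$ we have $|\xi|\le\kappa$, and the set of adjoined points has cardinality at most $\kappa\cdot\kappa\cdot\kappa=\kappa$; hence $|A(\xi)|\le\kappa$, while $\xi\subseteq A(\xi)$ and $x\in A(\xi)$ by construction. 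This establishes (1).

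For (2), let $U$ be open with $x\in U$ and let $\alpha<\kappa$. Pick $\beta<\kappa$ with $x\in B(x,\beta)\subseteq U$, and put $F=F(\alpha,\beta)$, so $\alpha\in F$. I claim $x_F\in\overline{U\cap A(\xi)}$: given any open $W$ with $x_F\in W$, choose $\gamma<\kappa$ with $B(x_F,\gamma)\subseteq W$; then $y_{\alpha,\beta,\gamma}\in B(x_F,\gamma)\cap B(x,\beta)\subseteq W\cap U$ and $y_{\alpha,\beta,\gamma}\in A(\xi)$, so $W$ meets $U\cap A(\xi)$. Hence $x$ is a $\theta$-cluster point of $\xi$ relative to $A(\xi)$, as required. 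No iteration is needed, because the auxiliary points $y_{\alpha,\beta,\gamma}$ are only asked to lie in $A(\xi)$ and carry no further structure; the one thing to watch — and the step I would be most careful about — is the bookkeeping that reuses the same index $\beta$ both to select the witness $x_{F(\alpha,\beta)}$ and to place the approximating points inside $B(x,\beta)$, so that shrinking an arbitrary $U$ to a basic neighborhood $B(x,\beta)$ makes all the pieces line up.
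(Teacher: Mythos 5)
Your argument is correct: the single-step construction that adjoins, for each pair of basic neighborhoods $B(x,\beta)$ and $B(x_{F(\alpha,\beta)},\gamma)$, a point of their intersection is exactly what is needed to turn ``$x_F\in\overline{B(x,\beta)}$'' into ``$x_F\in\overline{B(x,\beta)\cap A(\xi)}$'', and the cardinality bookkeeping ($\kappa^3=\kappa$ for infinite $\kappa=\chi(X)$) is right. The paper itself gives no proof, citing Hodel's Lemma 4.8, and your argument is essentially the standard one from that source, so there is nothing to add.
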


The key notion in \cite{bella}  was the following:
\begin{definition}   
Let $X$ be a space and  $A\subseteq X$.  $A$  is said
to
be  $\chi$-net-closed  provided that the following condition
holds:

Given any net $\xi=\{x_F:F\in [\chi(X)]^{<\omega}\}\subseteq A$,
if $\xi$ has a $\theta$-cluster point in $X$, then $\xi$ has  a
$\theta$-cluster point in $A$ relative to $A$. \end{definition} 
\begin{lemma} [\cite{bella}, Lemma 10] \label{lemma5} Let $X$ be
a
space and
$A\subseteq
X$. If $A$
is $\chi$-net-closed, then $A$ is closed in $X$. \end{lemma}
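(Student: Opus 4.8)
The plan is to prove this by contradiction: if $A$ fails to be closed I will exhibit a $\chi(X)$-net contained in $A$ that has a $\theta$-cluster point in $X$ but no $\theta$-cluster point in $A$ relative to $A$, contradicting $\chi$-net-closedness. So suppose $x\in\overline A\setminus A$ (if $A=\emptyset$ there is nothing to prove), write $\kappa=\chi(X)$, and fix an indexing $\{U_\alpha:\alpha<\kappa\}$ of a local base at $x$ (repeating entries if the base has size less than $\kappa$). For each $F\in[\kappa]^{<\omega}$ set $U_F=\bigcap_{\alpha\in F}U_\alpha$, an open set containing $x$ (with $U_\emptyset=X$); since $x\in\overline A$ we may pick $x_F\in U_F\cap A$. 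Then $\xi=\{x_F:F\in[\kappa]^{<\omega}\}$ is a $\kappa$-net contained in $A$.

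The first thing to verify is that $x$ is a $\theta$-cluster point of $\xi$ in $X$. Given an open set $U\ni x$ and $\alpha<\kappa$, choose $\beta<\kappa$ with $U_\beta\subseteq U$ and take $F=\{\alpha,\beta\}$; then $\alpha\in F$ and $x_F\in U_F\subseteq U_\beta\subseteq U\subseteq\overline U$, which is exactly what the definition demands. This step is where the indexing pays off: intersecting along each finite set of basic neighbourhoods of $x$ makes the $\theta$-cluster condition hold essentially by construction. Since $A$ is $\chi$-net-closed, $\xi$ therefore has a $\theta$-cluster point $y\in A$ relative to $A$.

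It remains to reach a contradiction, and this is where Hausdorffness enters. Because $x\notin A$ while $y\in A$ we have $x\ne y$, so we may fix disjoint open sets $V\ni y$ and $W\ni x$, and then $\alpha<\kappa$ with $U_\alpha\subseteq W$. Applying the definition of ``$\theta$-cluster point relative to $A$'' to the neighbourhood $V$ of $y$ and to this particular $\alpha$ yields $F\in[\kappa]^{<\omega}$ with $\alpha\in F$ and $x_F\in\overline{V\cap A}$. But $\alpha\in F$ gives $x_F\in U_F\subseteq U_\alpha\subseteq W$, and $W$ is open and disjoint from $V$, hence disjoint from $\overline V$ and a fortiori from $\overline{V\cap A}$ --- contradicting $x_F\in\overline{V\cap A}$. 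Thus no such $x$ exists, and $A$ is closed.

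As for difficulty, there is essentially no hard step here. The only point requiring a moment's thought is the choice of the net --- indexing the witnessing points by the finite subsets of a fixed enumeration of a local base at $x$ and intersecting along each subset --- after which both the existence of the $\theta$-cluster point $x$ in $X$ and the failure of any $\theta$-cluster point relative to $A$ follow by routine neighbourhood manipulations. Note that Lemma~\ref{lemma4} is not needed for this argument; it will be invoked only later, when one works with the set $A(\xi)$.
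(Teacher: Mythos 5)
Your proof is correct: the net $x_F\in\bigl(\bigcap_{\alpha\in F}U_\alpha\bigr)\cap A$ built from a local base at a point $x\in\overline A\setminus A$ does have $x$ as a $\theta$-cluster point, and the Hausdorff separation of $x$ from any relative $\theta$-cluster point $y\in A$ yields the contradiction exactly as you describe. The paper itself only cites this as Lemma 10 of the reference [2], and your argument is essentially the standard one given there, so there is nothing further to add.
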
 
 
\begin{lemma} [\cite{bella}, Lemma 11] \label{lemma6} If $X$ is
an
H-closed space, then any
$\chi$-net-closed set  is an H-set in $X$. \end{lemma}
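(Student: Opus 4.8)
The plan is to argue by contradiction. Suppose $A$ is $\chi$-net-closed but is \emph{not} an H-set in $X$. By Lemma~\ref{lemma5}, $A$ is closed in $X$, a fact I shall use freely. Fix an open cover $\mathcal{U}$ of $A$ witnessing the failure of the H-set property, i.e.\ $A\not\subseteq\overline{\bigcup\mathcal{V}}$ for every finite $\mathcal{V}\subseteq\mathcal{U}$; replacing $\mathcal{U}$ by the family of all finite unions of its members, we may assume $\mathcal{U}$ is closed under finite unions, so that $A\not\subseteq\overline{W}$ for each single $W\in\mathcal{U}$. The heart of the argument is to manufacture a $\chi(X)$-net $\xi=\{x_F:F\in[\chi(X)]^{<\omega}\}\subseteq A$ that has \emph{no} $\theta$-cluster point in $A$ relative to $A$. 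Granting this, the proof ends quickly: in an H-closed space every $\chi(X)$-net $\{y_F\}$ has a $\theta$-cluster point --- otherwise each $x\in X$ would carry an open $U_x\ni x$ and an $\alpha_x<\chi(X)$ with $y_F\notin\overline{U_x}$ whenever $\alpha_x\in F$, and a finite subfamily $U_{x_1},\dots,U_{x_n}$ with $X=\overline{U_{x_1}\cup\dots\cup U_{x_n}}=\overline{U_{x_1}}\cup\dots\cup\overline{U_{x_n}}$ would be contradicted at $F=\{\alpha_{x_1},\dots,\alpha_{x_n}\}$; so our $\xi$ has a $\theta$-cluster point in $X$, whence, $A$ being $\chi$-net-closed, $\xi$ has a $\theta$-cluster point in $A$ relative to $A$ --- contrary to the construction.

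It remains to build $\xi$. The property to arrange is that each $x\in A$ admits an open $U\ni x$ and an $\alpha<\chi(X)$ with $x_F\notin\overline{U\cap A}$ whenever $\alpha\in F$. One clean way to secure this is to fix a colouring $c\colon\mathcal{U}\to\chi(X)$ and choose, for every finite $F\subseteq\chi(X)$, a point $x_F\in A\setminus\bigcup\{\overline{W}:W\in\mathcal{U},\ c(W)\in F\}$; then for $x\in A$ one picks $W\in\mathcal{U}$ with $x\in W$ and takes $\alpha=c(W)$, since $x_F\notin\overline{W}\supseteq\overline{W\cap A}$ for every $F\ni\alpha$. These $x_F$ exist precisely when no union of finitely many colour classes has its closure covering $A$. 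If $\mathcal{U}$ happens to have a subcover of $A$ of cardinality $\le\chi(X)$ this is easy --- colour that subcover injectively and drop the rest (which still leaves no finite closure-subcover of $A$), so that every $c^{-1}(F)$ is finite and $\bigcup\{\overline{W}:c(W)\in F\}$ is merely the closure of a finite subfamily, hence proper in $A$.

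The genuine obstacle, and what I expect to require real work, is the case where $\mathcal{U}$ has no small subcover: then no colouring need do, a single colour class being forced to be infinite with closures that may engulf $A$. This is exactly the point at which the Hodel-style closing-off technique of \cite{Hodel2006} and \cite{bella} must be invoked. Rather than colouring a fixed $\mathcal{U}$, one runs a recursion of length $\chi(X)$ that simultaneously builds $\xi$ on a carefully chosen subset of $A$ of size $\le\chi(X)$ and uncovers, stage by stage, just enough members of $\mathcal{U}$ and basic neighbourhoods: at stage $\alpha$ the failure of the H-set property is used to adjoin new points of $A$ escaping the finitely many members of $\mathcal{U}$ activated so far, arranged so that in the end every point of $A$ is separated from a cofinal part of $\xi$. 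Closedness of $A$ (Lemma~\ref{lemma5}) keeps the relative closures $\overline{U\cap A}$ under control throughout. Once $\xi$ is in hand, checking that it has no $\theta$-cluster point relative to $A$ is routine, and the contradiction above applies; the details of the recursion follow \cite{bella}.
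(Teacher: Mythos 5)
The first thing to note is that the paper itself offers no proof of this statement: it is quoted verbatim from \cite{bella} (Lemma 11 there), and the author explicitly refers the reader to that paper for the argument. So there is no in-paper proof to compare yours against; I can only assess your attempt on its own terms.

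The framework you set up is the right one, and everything you actually carry out is correct: the reduction to manufacturing a $\chi(X)$-net $\xi\subseteq A$ with no $\theta$-cluster point in $A$ relative to $A$; the verification that H-closedness forces every $\chi(X)$-net to have a $\theta$-cluster point in $X$, so that $\chi$-net-closedness then produces one inside $A$ and yields the contradiction; and the colouring construction in the special case where the witnessing cover $\mathcal{U}$ admits a subcover of $A$ of cardinality at most $\chi(X)$.

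The difficulty is that the case you yourself label ``the genuine obstacle'' is never resolved. A witnessing cover $\mathcal{U}$ need not have a subcover of $A$ of size $\le\chi(X)$ --- there is no Lindel\"of-type hypothesis available --- and for a large $\mathcal{U}$ some colour class must be infinite, so the sets $\bigcup\{\overline{W}:c(W)\in F\}$ may contain $A$ and the points $x_F$ need not exist. Your proposed remedy is a recursion of length $\chi(X)$ described in a single sentence and concluded with ``the details of the recursion follow \cite{bella}.'' That is a restatement of the citation, not a proof. The entire content of the lemma lies exactly here: one must explain how a recursion of length $\chi(X)$ can assign to \emph{every} point of the possibly enormous set $A$ a neighbourhood $U$ and an index $\alpha<\chi(X)$ such that $x_F\notin\overline{U\cap A}$ whenever $\alpha\in F$, while simultaneously guaranteeing that the points $x_F$, indexed only by $[\chi(X)]^{<\omega}$, can always be chosen in $A$. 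Nothing in your sketch indicates how the finitely many ``activated'' members of $\mathcal{U}$ at each of $\chi(X)$ stages come to witness non-clustering at all points of $A$, nor what substitute for the missing small subcover makes the choice of $x_F$ possible. Until that construction is written out, the argument is incomplete at its only non-trivial point.
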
  

\smallskip 
We present now the property $\mathcal P$ for a possible answer to
Question \ref{q0}. 
\begin{definition} A space $X$ is $\chi$-Lindel\"of  if  for any
$\chi$-
net-closed  set $A\subseteq X$ and any collection $\mathcal 
U=\bigcup \{\mathcal U_\alpha :\alpha <\chi(X)\}$ 
of
open sets in $X$ which covers $A$ there  are countable
subcollections $\mathcal  V_\alpha \subseteq \mathcal  U_\alpha $
for each $\alpha <\chi(X)$ such that
$A\subseteq
\bigcup \{ \overline{\bigcup \mathcal V_\alpha }:\alpha
<\chi(X)\}$.  \end{definition}

By Lemma \ref{lemma5}  we immediately have:
\begin{proposition} \label{pro1}
Every Lindel\"of space is $\chi$-Lindel\"of. 
\end{proposition}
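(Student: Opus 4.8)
The plan is to reduce the statement to the classical fact that a closed subspace of a Lindel\"of space is Lindel\"of. First I would take an arbitrary $\chi$-net-closed set $A\subseteq X$ and invoke Lemma~\ref{lemma5} to conclude that $A$ is closed in $X$; since $X$ is Lindel\"of, $A$ is then Lindel\"of in the subspace topology. This is the only place where the hypothesis on $A$ is used, and it is precisely what makes the argument go through.

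Next, let $\mathcal U=\bigcup\{\mathcal U_\alpha:\alpha<\chi(X)\}$ be a collection of open sets of $X$ covering $A$. Applying the Lindel\"of property of $A$ to the open (in $A$) cover $\{U\cap A:U\in\mathcal U\}$, I would extract a countable subfamily $\mathcal W\subseteq\mathcal U$ that still covers $A$. Each member of $\mathcal W$ belongs to $\mathcal U_\alpha$ for at least one $\alpha<\chi(X)$; fixing one such index for each member of $\mathcal W$, I would let $\mathcal V_\alpha\subseteq\mathcal U_\alpha$ be the set of those members of $\mathcal W$ assigned to $\alpha$. Each $\mathcal V_\alpha$ is countable (being a subfamily of the countable family $\mathcal W$), possibly empty, and $\bigcup\{\mathcal V_\alpha:\alpha<\chi(X)\}=\mathcal W$. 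Hence
\[
A\subseteq\bigcup\mathcal W=\bigcup\Bigl\{\textstyle\bigcup\mathcal V_\alpha:\alpha<\chi(X)\Bigr\}\subseteq\bigcup\Bigl\{\overline{\textstyle\bigcup\mathcal V_\alpha}:\alpha<\chi(X)\Bigr\},
\]
which is exactly the conclusion required in the definition of $\chi$-Lindel\"of.

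I do not expect any genuine obstacle here: the substantive ingredient is Lemma~\ref{lemma5}, which upgrades ``$\chi$-net-closed'' to ``closed'' and thus lets us transfer the Lindel\"of property to $A$; everything else is bookkeeping. Note in particular that the closures in the final displayed inclusion are not actually needed — the families $\mathcal V_\alpha$ already cover $A$ — so the property $\chi$-Lindel\"of is being verified in a slightly stronger (closure-free) form for Lindel\"of spaces.
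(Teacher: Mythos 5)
Your proposal is correct and is exactly the argument the paper intends: the paper derives Proposition~\ref{pro1} ``immediately'' from Lemma~\ref{lemma5}, i.e.\ a $\chi$-net-closed set is closed, hence Lindel\"of as a closed subspace, and the countable subcover is then distributed among the families $\mathcal U_\alpha$ just as you describe. Your closing remark that the closures are not even needed here is a fair (and accurate) observation, but it is only bookkeeping on top of the same route.
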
 
And by Lemma \ref{lemma6}:
\begin{proposition} \label{pro2}  Every H-closed space is 
$\chi$-Lindel\"of. 
\end{proposition}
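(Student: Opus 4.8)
The plan is to reduce the statement to Lemma~\ref{lemma6} together with the elementary fact that closure commutes with finite unions. Let $X$ be H-closed, let $A\subseteq X$ be $\chi$-net-closed, and let $\mathcal U=\bigcup\{\mathcal U_\alpha:\alpha<\chi(X)\}$ be a cover of $A$ by open sets of $X$. First I would invoke Lemma~\ref{lemma6}: since $X$ is H-closed and $A$ is $\chi$-net-closed, $A$ is an H-set in $X$. Applying the definition of an H-set to the open cover $\mathcal U$ of $A$, we obtain a \emph{finite} subcollection $\mathcal V\subseteq\mathcal U$ with $A\subseteq\overline{\bigcup\mathcal V}$.

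The next step is bookkeeping: distribute the finitely many members of $\mathcal V$ among the $\mathcal U_\alpha$. For each open set $V\in\mathcal V$ choose one index $\alpha(V)<\chi(X)$ with $V\in\mathcal U_{\alpha(V)}$, and for every $\alpha<\chi(X)$ put $\mathcal V_\alpha=\{V\in\mathcal V:\alpha(V)=\alpha\}$. Then each $\mathcal V_\alpha$ is finite, hence countable, $\mathcal V_\alpha\subseteq\mathcal U_\alpha$, and $\bigcup\mathcal V=\bigcup\{\bigcup\mathcal V_\alpha:\alpha<\chi(X)\}$; crucially, $\mathcal V_\alpha\neq\emptyset$ for only finitely many $\alpha$, since $\mathcal V$ is finite.

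Finally, because only finitely many of the sets $\bigcup\mathcal V_\alpha$ are nonempty, closure distributes over this union:
\[
\overline{\bigcup\mathcal V}=\overline{\bigcup\{\textstyle\bigcup\mathcal V_\alpha:\alpha<\chi(X)\}}=\bigcup\{\overline{\textstyle\bigcup\mathcal V_\alpha}:\alpha<\chi(X)\},
\]
so $A\subseteq\bigcup\{\overline{\bigcup\mathcal V_\alpha}:\alpha<\chi(X)\}$, which is exactly the required conclusion. I do not anticipate a real obstacle here; the only point that needs a moment's care is the last displayed equality, where it is essential that the index set of nonempty $\mathcal V_\alpha$ is finite (so that the familiar identity $\overline{U_1\cup\dots\cup U_n}=\overline{U_1}\cup\dots\cup\overline{U_n}$ applies), and this is guaranteed by the finiteness of $\mathcal V$ coming from the H-set property.
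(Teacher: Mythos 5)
Your proof is correct and follows the same route as the paper, which simply derives the proposition from Lemma~\ref{lemma6}; you have merely written out the routine bookkeeping (distributing the finite subfamily among the $\mathcal U_\alpha$ and using that closure commutes with finite unions) that the paper leaves implicit.
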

The next result requires a proof.
\begin{proposition} Every ccc space $X$ is $\chi$-Lindel\"of.
\end{proposition}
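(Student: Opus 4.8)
The plan is to reduce the statement to the classical observation that in a ccc space every union of open sets contains a countable subunion that is dense in it, and then to apply this observation to each of the $\chi(X)$ pieces $\mathcal U_\alpha$ of the given cover separately; the set $A$ itself will play no role.

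First I would establish the auxiliary fact: \emph{if $X$ is ccc and $\mathcal W$ is any family of open subsets of $X$, then there is a countable $\mathcal W'\subseteq\mathcal W$ with $\bigcup\mathcal W'$ dense in $\bigcup\mathcal W$, and hence $\bigcup\mathcal W\subseteq\overline{\bigcup\mathcal W'}$.} To prove it, let $\mathcal B$ be the family of all nonempty open $V\subseteq X$ with $V\subseteq W$ for some $W\in\mathcal W$, and use Zorn's Lemma to pick a maximal pairwise disjoint subfamily $\{V_i:i\in I\}$ of $\mathcal B$; by ccc the index set $I$ is countable. Choosing $W_i\in\mathcal W$ with $V_i\subseteq W_i$ and putting $\mathcal W'=\{W_i:i\in I\}$, I would check that $\bigcup\mathcal W'$ is dense in $G:=\bigcup\mathcal W$: otherwise some nonempty open $O\subseteq G$ misses $\bigcup\mathcal W'$, and picking $W\in\mathcal W$ with $O\cap W\neq\emptyset$ produces a member $O\cap W$ of $\mathcal B$ disjoint from every $V_i$, contradicting maximality. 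Density of $\bigcup\mathcal W'$ in the open set $G$ then gives $G\subseteq\overline{\bigcup\mathcal W'}$, the closure being computed in $X$.

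With this in hand the proposition is immediate. Given a $\chi$-net-closed set $A\subseteq X$ (the hypothesis will not actually be used) and a cover $\mathcal U=\bigcup\{\mathcal U_\alpha:\alpha<\chi(X)\}$ of $A$ by open sets, I would apply the auxiliary fact with $\mathcal W=\mathcal U_\alpha$ for each $\alpha<\chi(X)$ to obtain a countable $\mathcal V_\alpha\subseteq\mathcal U_\alpha$ with $\bigcup\mathcal U_\alpha\subseteq\overline{\bigcup\mathcal V_\alpha}$, and then conclude
\[
A\ \subseteq\ \bigcup\mathcal U\ =\ \bigcup_{\alpha<\chi(X)}\bigcup\mathcal U_\alpha\ \subseteq\ \bigcup_{\alpha<\chi(X)}\overline{\bigcup\mathcal V_\alpha},
\]
which is exactly the inequality required in the definition of $\chi$-Lindel\"of.

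I do not expect a genuine obstacle here: ccc enters only through the existence of the countable maximal disjoint family, and the passage from ``dense'' to ``contained in the closure'' is routine. The one point perhaps worth flagging is that, in contrast with the Lindel\"of and H-closed cases, this argument uses nothing about $A$ — the conclusion in fact holds for an arbitrary subset $A$ of a ccc space, so one is really proving something slightly stronger than stated.
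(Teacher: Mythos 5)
Your proof is correct and follows essentially the same route as the paper: the paper also takes, for each $\alpha$, a maximal pairwise disjoint family $\mathcal C_\alpha$ of nonempty open sets each contained in some member of $\mathcal U_\alpha$, uses ccc to make it countable, and selects the corresponding $U_C$'s to get $\bigcup\mathcal U_\alpha\subseteq\overline{\bigcup\mathcal W_\alpha}$. Your closing remark is also accurate for the paper's argument, which likewise never uses that $A$ is $\chi$-net-closed.
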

\begin {proof} Let $A$ be a $\chi$-net-closed set and $\mathcal
U=\bigcup\{\mathcal U_\alpha :\alpha <\chi(X)\}$ be a collection
of open sets satisfying $A\subseteq \bigcup\mathcal U$.
For any $\alpha <\chi(X)$  let
$\mathcal   C_\alpha  $ be a maximal collection of pairwise
disjoint
non-empty open sets of $X$ such that  for each $C\in \mathcal
C_ \alpha   $
there
is some $U_C\in \mathcal  U_\alpha  $ with $C\subseteq U_C$.  By
letting $\mathcal  W_\alpha  =\{U_C:C\in \mathcal  C_\alpha  \}$,
the
maximality of  $\mathcal  C_\alpha  $ implies  $\bigcup \mathcal 
U_\alpha 
\subseteq \overline {\bigcup \mathcal  W_\alpha }$ and so
$A\subseteq
\bigcup\{\overline {\bigcup \mathcal  W_\alpha }:\alpha
<\chi(X) \}$.
Since $|\mathcal  W_\alpha  |\le |\mathcal  C_\alpha |\le
\omega $, the space is $\chi$-Lindel\"of. 
\end{proof}
\smallskip 

 The above propositions show that   the property $\mathcal P$  of

being  $\chi$-Lindel\"of is a common extension of Lindel\"of, H-
closed and ccc. 

If $S$ is the Sorgenfrey line and $A([0,1])$ the Aleksandroff
duplicate of the unit interval, then  the space $X=(S\times
S)\oplus A([0,1])$ is first countable and $\chi$-Lindel\"of, but
$X$ is neither Lindel\"of nor H-closed nor ccc. 

 $\chi$-Lindel\"ofness gives a positive answer to Question
\ref{q0}
thanks to the following:  

\begin{theorem} \label{theor}
 If the space $X$ is 
$\chi$-Lindel\"of, then $|X|\le
2^{\chi(X)}$. \end{theorem}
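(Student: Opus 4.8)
The plan is to run a closing-off argument in the style of Arhangel'ski\u\i\ and Hodel, using the $\theta$-net machinery of Lemmas \ref{lemma4}--\ref{lemma6} to manufacture a $\chi$-net-closed (hence closed) subset of size $\le 2^{\chi(X)}$, and then to squeeze it up to all of $X$ with the $\chi$-Lindel\"of hypothesis.

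Write $\kappa=\chi(X)$ and fix for each $x\in X$ a local base $\mathcal B_x=\{B(x,\beta):\beta<\kappa\}$. I would build an increasing chain $\langle F_\alpha:\alpha<\kappa^+\rangle$ of subsets of $X$, with $F_0$ a singleton, $|F_\alpha|\le 2^\kappa$, and $F_\lambda=\bigcup_{\alpha<\lambda}F_\alpha$ at limits, arranging that $F_{\alpha+1}$ absorbs two families of witnesses computed from $F_\alpha$. First, for every $\kappa$-net $\xi\subseteq F_\alpha$ having a $\theta$-cluster point in $X$, throw the set $A(\xi)$ of Lemma \ref{lemma4} into $F_{\alpha+1}$: there are at most $(2^\kappa)^\kappa=2^\kappa$ such nets and $|A(\xi)|\le\kappa$, so this costs only $2^\kappa$. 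Secondly, with $\mathcal B^\alpha:=\bigcup_{x\in F_\alpha}\mathcal B_x$ (of size $\le 2^\kappa$), for every $\kappa$-sequence $\langle\mathcal V_\beta:\beta<\kappa\rangle$ of countable subfamilies of $\mathcal B^\alpha$ for which $X\setminus\bigcup_{\beta<\kappa}\overline{\bigcup\mathcal V_\beta}\neq\emptyset$, put one point of that set into $F_{\alpha+1}$; there are at most $2^\kappa$ such sequences, so this too is harmless for the cardinality bound. A routine count keeps $|F_\alpha|\le 2^\kappa$ all the way up, so $F:=\bigcup_{\alpha<\kappa^+}F_\alpha$ has $|F|\le 2^\kappa$.

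Since $\kappa^+$ is regular and bigger than $|[\kappa]^{<\omega}|=\kappa$, any $\kappa$-net contained in $F$ already sits inside some $F_\alpha$; the first clause of the construction, Lemma \ref{lemma4}, and the observation that a $\theta$-cluster point of $\xi$ relative to $A(\xi)$ remains one relative to the larger set $F$, then show that $F$ is $\chi$-net-closed, so $F$ is closed by Lemma \ref{lemma5}. To finish I would argue $F=X$ by contradiction: suppose $p\in X\setminus F$. For each $x\in F$, Hausdorffness provides a basic $U_x\in\mathcal B_x$ with $p\notin\overline{U_x}$ (an open set missing a neighbourhood of $p$ has its closure missing that neighbourhood), and hence an index $\beta_x<\kappa$ with $B(p,\beta_x)\cap\overline{U_x}=\emptyset$. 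Grouping, set $\mathcal U_\beta:=\{U_x:x\in F,\ \beta_x=\beta\}$; then $\mathcal U=\bigcup_{\beta<\kappa}\mathcal U_\beta$ is an open cover of the $\chi$-net-closed set $F$ split into $\kappa$ pieces, so $\chi$-Lindel\"ofness produces countable $\mathcal V_\beta\subseteq\mathcal U_\beta$ with $F\subseteq\bigcup_{\beta<\kappa}\overline{\bigcup\mathcal V_\beta}$. But $B(p,\beta)$ is disjoint from every member of $\mathcal U_\beta$, hence from $\bigcup\mathcal V_\beta$, hence (being open) from $\overline{\bigcup\mathcal V_\beta}$, so $p\notin\bigcup_{\beta<\kappa}\overline{\bigcup\mathcal V_\beta}$. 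On the other hand $\bigcup_{\beta<\kappa}\mathcal V_\beta$ consists of at most $\kappa$ basic sets drawn from at most $\kappa$ points of $F$, so it lies in some $\mathcal B^{\alpha_0}$; therefore at stage $\alpha_0$ the construction put a point of $X\setminus\bigcup_\beta\overline{\bigcup\mathcal V_\beta}$ into $F$ --- contradicting $F\subseteq\bigcup_\beta\overline{\bigcup\mathcal V_\beta}$. Hence $F=X$ and $|X|\le 2^{\chi(X)}$.

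I expect the real difficulty to be precisely this last step. The conclusion of $\chi$-Lindel\"ofness only controls the closures $\overline{\bigcup\mathcal V_\beta}$, and in a merely Hausdorff space the closure of a countable union of sets that individually miss $\overline{\{p\}}$ can still grab $p$; so a naive cover by separating neighbourhoods will not do. The trick of indexing the cover by \emph{which} basic neighbourhood of $p$ each $U_x$ avoids is what forces every piece $\mathcal U_\beta$, and thus every $\overline{\bigcup\mathcal V_\beta}$, to stay uniformly clear of $p$; once that is in place, the argument is the usual closing-off bookkeeping together with Lemmas \ref{lemma4} and \ref{lemma5}.
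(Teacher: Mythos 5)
Your proposal is correct and follows essentially the same closing-off argument as the paper: the same two closure conditions (absorbing the sets $A(\xi)$ from Lemma \ref{lemma4} and adding witnesses against $\kappa$-indexed families of countable subcollections of the accumulated local bases), and the same final step of covering the $\chi$-net-closed set $F$ by basic neighbourhoods grouped according to which basic neighbourhood of the outside point they avoid. The only differences (choosing one avoided index $\beta_x$ per point rather than collecting all basic sets disjoint from each $U_\alpha$) are cosmetic.
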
 
\begin{proof}  Let $\chi(X)\le \kappa $ and for each $x \in X$
fix a
local base $\mathcal  U_x$ at $x$ satisfying $|\mathcal  U_x|\le
\kappa $.
For any $\kappa $-net $\xi=\{x_F:F\in [\kappa
]^{<\omega}\}\subseteq X$  which has a $\theta$-cluster point in
$X$,  
fix a set $A(\xi)\subseteq X$ satisfying conditions 1) and 2)
 in Lemma \ref{lemma4}. If $\xi$ has no $\theta$-cluster point,
we simply
put $A(\xi)=\xi$.    By transfinite induction, we will construct
a non-
decreasing collection $\{H_\alpha :\alpha <\kappa ^+\}$ of 
subsets of $X$ in such a way that for any $\alpha $ the following
conditions hold:
\begin{enumerate}
\item $|H_\alpha |\le 2^\kappa $;

\item if $\xi=\{x_F:F\in [\kappa ]^{<\omega}\}\subseteq
H_\alpha $, then $A(\xi)\subseteq H_{\alpha +1}$;

\item     if $X\setminus \bigcup\{\overline {\cup
\mathcal  C}:\mathcal  C\in \Gamma\}\ne \emptyset $ for some
$\Gamma\in
\left[[\bigcup\{\mathcal  U_p:p\in H_\alpha \}]^{\le \omega
}\right]^{\le \kappa
}$, then $H_{\alpha +1}\setminus \bigcup\{\overline {\cup\mathcal
C}:\mathcal  C\in \Gamma\}\ne \emptyset $.

\end{enumerate}

Take $x_0\in X$ and fix a choice function $\phi$ on $X$ extended
by letting $\phi(\emptyset )=x_0$. Let $H_0=\{\phi(\emptyset)\}$
and assume to have already defined the sets
$\{H_\beta:\beta<\alpha \}$ according to the previous conditions.
If $\alpha $ is a limit ordinal, then put $H_\alpha = 
{\bigcup\{H_\beta:\beta<\alpha \}}$. If $\alpha =\gamma+1$ we put

\begin{multline*}
H_\alpha = \left\{\phi(X\setminus \bigcup\{\overline {\cup
\mathcal C}:\mathcal  C\in \Gamma\}) : \Gamma\in
\left[[\bigcup\{\mathcal
U_p:p\in
H_\gamma\}]^{\le \omega }\right]^{\le \kappa }\}
\right\}\cup \\
\cup\bigcup\{A(\xi): \xi=\{x_F:F\in [\kappa
]^{<\omega}\}\subseteq H_\gamma\}.
\end{multline*}

A simple counting argument shows that $|H_\alpha |\le 2^\kappa $.

Now, let $H=\bigcup\{H_\alpha :\alpha <\kappa ^+\}$. Since
$\kappa ^+$ is a regular
cardinal and a $\kappa $-net  is a set of cardinality at most
$\kappa $, every $\kappa $-net $\xi\subseteq H$is actually
contained in some $H_\alpha $. Therefore,    Condition 2) in our
inductive
construction ensures that the set $H$ is $\chi$-net-closed.
Since 
$|H|\le 2^\kappa $, if $X=H$ we are done. Assume the contrary
and choose some $q\in X\setminus H$. Let $\mathcal 
U_q=\{U_\alpha 
:  \alpha < \kappa \}$ and let $\mathcal  V_\alpha =\{U_p: p\in H, U_p\in
\mathcal U_p,  U_p\cap
U_\alpha =\emptyset \}$. Since $X$ is $T_2$, we see that the
collection $\mathcal  V=\bigcup\{\mathcal  V_\alpha :\alpha
<\kappa \}$
is a cover
of $H$.  Since $X$ is $\chi$-Lindel\"of,  we may choose
$\mathcal
W_\alpha 
\in
[\mathcal  V_\alpha ]^{\le \omega }$ for every $\alpha  <\kappa 
$ such
that
$H\subseteq \bigcup
\{\overline {\cup\mathcal  W_\alpha }:\alpha <\kappa \}\subseteq
X\setminus \{q\}$. But the regularity
of $\kappa ^+$ implies that there is some $\beta <\kappa ^+$
such that  $\{\mathcal  W_\alpha :\alpha <\kappa \}\subseteq
[\bigcup
\{\mathcal
U_p:p\in H_\beta  \}]^{\le \omega }$. This is in contrast with  
condition 3) in our inductive construction and we reach a
contradiction. \end{proof}

\bigskip

\end{document}